\newtheorem{thm}{Theorem}[section]
\newtheorem{theorem}[thm]{Theorem}
\newtheorem{conjecture}[thm]{Conjecture}
\theoremstyle{definition}
\newtheorem{question}[thm]{Question}
\theoremstyle{remark}
\newtheorem{remark}[thm]{Remark}
\numberwithin{equation}{section}
\newcommand{\mc}{\mathcal}
\renewcommand{\theta}{\vartheta}
\newcommand{\N}{\mathbb{N}}
\newcommand{\Z}{\mathbb{Z}}
\newcommand{\Q}{\mathbb{Q}}
\DeclareMathOperator{\Kh}{Kh}
\begin{document}

\title[The Knight Move Conjecture is false]{The Knight Move Conjecture is false}%

\author{Ciprian Manolescu}%
\address{Department of Mathematics, UCLA,
520 Portola Plaza, Los Angeles, CA 90095}%
\email{cm@math.ucla.edu}%
%\thanks{}

\author{Marco Marengon}%
\address{Department of Mathematics, UCLA,
520 Portola Plaza, Los Angeles, CA 90095}%
\email{marengon@ucla.edu}%
%\thanks{}

%\subjclass[2010]{}%
%\keywords{}

\date{}%
%\dedicatory{}%
%\commby{}%

% ----------------------------------------------------------------
\begin{abstract}
The Knight Move Conjecture claims that the Khovanov homology of any knot decomposes as direct sums of some ``knight move'' pairs and a single ``pawn move'' pair. This is true for instance whenever the Lee spectral sequence from Khovanov homology to $\Q^2$ converges on the second page, as it does for all alternating knots and knots with unknotting number at most $2$. We present a counterexample to the Knight Move Conjecture. For this knot, the Lee spectral sequence admits a nontrivial differential of bidegree $(1,8)$.
\end{abstract}

\maketitle
% ----------------------------------------------------------------

\section{Introduction}

Almost 20 years ago, Khovanov~\cite{Kh} introduced a categorification of the Jones polynomial, now known by the name of \emph{Khovanov homology}. This is an invariant of links in $S^3$ that is strictly more powerful than the Jones polynomial~\cite{BN}, and it detects the unknot~\cite{KM}. Furthermore, using Khovanov homology, Rasmussen defined a concordance homomorphism $s \colon \mc C \to 2\Z$ from the smooth knot concordance group, and used it to give the first combinatorial proof of Milnor's conjecture~\cite{s-invariant}.

Given a knot $K \subset S^3$, its Khovanov homology over $\Q$ is a bigraded vector space over $\Q$, endowed with a \emph{homological grading} $i \in \Z$ and a \emph{quantum grading} $j \in 1 + 2\Z$. We denote this bigrading by $(i,j)$. We denote the Khovanov homology of a knot $K \subset S^3$ by
\[
\Kh(K) = \bigoplus_{\substack{i \in \Z \\  j \in 1+2\Z }} \Kh^{i,j}(K),
\]
and its Poincar\'e series by $\Kh(K)(t,q) = \sum_{i,j} \dim_{\Q}\left(\Kh^{i,j}(L)\right) t^i q^j$.

\subsection{The structure of Khovanov homology}
An early conjecture about the structure of Khovanov homology~\cite[Conjecture 1]{BN}, known as the \emph{Knight Move Conjecture}, is due to Bar-Natan, Garoufalidis, and Khovanov. It says that it is always possible to decompose the Khovanov homology of a knot into the direct sum of elementary pieces.

\begin{conjecture}[Knight Move Conjecture~\cite{BN}]
Given a knot $K$, its Khovanov homology over $\Q$ is the direct sum of a single \emph{pawn move} piece
\[
\Q\{0,s-1\} \oplus \Q\{0,s+1\},
\]
where $s$ is Rasmussen's invariant, and several \emph{knight move} pieces
\[
\Q\{i, j\} \oplus \Q\{i+1, j+4\},
\]
for various $i,j \in \Z$.
\end{conjecture}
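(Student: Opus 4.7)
The natural route is the \emph{Lee spectral sequence}. Lee introduced a perturbation of the Khovanov differential by terms of lower order in the quantum filtration; for a knot $K$, the homology of this deformed complex is $\Q^2$, with generators in quantum gradings $s(K) \pm 1$, where $s$ is Rasmussen's invariant. Filtering by quantum grading produces a spectral sequence with $E_2 \cong \Kh(K;\Q)$ converging to $\Q \oplus \Q$, whose $k$th differential $d_k$ has bidegree $(1, 4k)$.

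The plan then breaks into three steps. First, the two surviving generators on the $E_\infty$ page, which must sit in bidegrees $(0, s \pm 1)$, account for the pawn move summand $\Q\{0, s-1\} \oplus \Q\{0, s+1\}$. Second, any $d_1$-differential from $(i,j)$ to $(i+1, j+4)$ kills both its source and its target in passing from $E_2$ to $E_3$, so each such cancelling pair produces a knight move summand $\Q\{i,j\} \oplus \Q\{i+1, j+4\}$ of $\Kh(K;\Q)$. Third, and this is the crux, one must prove that the spectral sequence collapses at $E_3$: every higher differential $d_k$ with $k \geq 2$ must vanish.

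The third step is the genuine obstacle. In favourable cases it is automatic: for an alternating, or more generally \emph{thin}, knot, $\Kh(K;\Q)$ is supported on the two diagonals $j - 2i \in \{s-1, s+1\}$, which leaves no room for any differential of bidegree $(1, 4k)$ with $k \geq 2$; an analogous width bound handles knots of unknotting number at most $2$. In general, however, there is no bigrading obstruction to a nonzero $d_k$ of bidegree $(1, 4k)$ once the homology is wide enough in the quantum direction. To proceed one would need a structural argument — perhaps exploiting the Frobenius algebra underlying the Lee deformation, or functoriality of Khovanov homology under link cobordisms — to force all higher differentials to zero.

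This is the step I expect to be insurmountable in full generality. The abstract already signals where the plan breaks: for a sufficiently complicated knot, a differential of bidegree $(1,8)$ on the Lee spectral sequence survives, so no bigrading or easy algebraic reason forces $d_2=0$, and the knight-move-plus-pawn-move decomposition must fail. A proof of the conjecture along the Lee-spectral-sequence lines is therefore doomed; any successful approach would have to extract the decomposition from some as-yet-unknown extra structure on $\Kh(K;\Q)$ not visible to the Lee filtration.
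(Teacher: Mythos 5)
You have correctly diagnosed the situation: this statement is a conjecture, not a theorem, and the paper's contribution is to \emph{refute} it rather than prove it. Your three-step plan is the standard one, and your identification of the third step --- forcing all higher Lee differentials to vanish --- as the insurmountable obstacle is exactly right. The paper exhibits a $38$-crossing knot $K$ (obtained from an $8$-crossing unknot diagram by a full positive twist on $6$ strands) for which the computed Khovanov homology has a one-dimensional piece in bidegree $(i,j)=(1,1)$ while the bidegrees $(0,-3)$ and $(2,5)$ are both empty. Since that generator sits in homological degree $1$ it cannot belong to the pawn move, and neither possible knight move partner exists, so no decomposition of the conjectured form is possible. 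Note that this refutation is even more elementary than your write-up suggests: it is a direct inspection of the bigraded dimensions and does not invoke the spectral sequence at all. The Lee machinery enters only in the follow-up observation that, since the $(1,1)$ class cannot be killed by $d_1$ and there is no room for $d_n$ with $n\geq 3$, the differential $d_2$ of bidegree $(1,8)$ must be nonzero --- precisely the phenomenon you anticipated.

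Two small corrections to your proposal. First, your indexing is internally inconsistent: you write $E_2\cong\Kh(K;\Q)$ but then speak of a ``$d_1$-differential'' of bidegree $(1,4)$ acting on it; in the paper's convention $E_1=\Kh(K;\Q)$ and $d_n$ has bidegree $(1,4n)$, while in Rasmussen's convention the first page carrying Khovanov homology is $E_2$ and its differential is $d_2$ of bidegree $(1,4)$. Pick one. Second, your remark that ``an analogous width bound handles knots of unknotting number at most $2$'' is not quite how that case goes: the result of Alishahi and Dowlin bounds the page at which the spectral sequence collapses in terms of the unknotting number (giving $d_n=0$ for $n\geq u(K)$ in the paper's indexing), which is a statement about the differentials rather than about the width of the homology. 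This is also what yields the paper's side observation that the counterexample has unknotting number exactly $3$.
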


In terms of Poincar\'e series, this conjecture can be rewritten as follows (see \cite[Conjecture 5.2]{mnm}):

\begin{conjecture}[Reformulation of the Knight Move Conjecture]
For any knot $K$, there is a Laurent polynomial $f_2 \in \N[t^{\pm1}, q^{\pm1}]$ so that
\[
\Kh(K)(t,q) = q^s (q+q^{-1}) +  f_{2}(t,q) (1 + tq^{4}).
\]
\end{conjecture}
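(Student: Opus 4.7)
My plan is to establish that this reformulation is equivalent to the original Knight Move Conjecture by direct translation: both implications amount to bookkeeping of Poincar\'e series, using the fact that Poincar\'e series are additive over direct sums of bigraded $\Q$-vector spaces.

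For the forward direction (original conjecture implies reformulation), I would compute the Poincar\'e series of each elementary piece. The pawn move piece $\Q\{0,s-1\}\oplus\Q\{0,s+1\}$ contributes $q^{s-1}+q^{s+1}=q^s(q+q^{-1})$, and each knight move piece $\Q\{i,j\}\oplus\Q\{i+1,j+4\}$ contributes $t^iq^j+t^{i+1}q^{j+4}=t^iq^j(1+tq^4)$. Summing over all knight moves with multiplicity then gives
\[
\Kh(K)(t,q)=q^s(q+q^{-1})+f_2(t,q)(1+tq^4), \qquad f_2(t,q) := \sum_k t^{i_k}q^{j_k},
\]
and this $f_2$ manifestly lies in $\N[t^{\pm 1}, q^{\pm 1}]$. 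For the converse, given such a polynomial identity, I would expand $f_2 = \sum_{i,j} n_{i,j}\, t^iq^j$ with $n_{i,j}\in\N$ and build the abstract direct sum of a single pawn move piece at quantum grading $s$ together with $n_{i,j}$ copies of the knight move piece at $(i,j)$ for each monomial. Because any two finite-dimensional bigraded $\Q$-vector spaces with the same Poincar\'e series are isomorphic, this produces a direct-sum decomposition of $\Kh(K)$ of the form required by the original conjecture.

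The only small wrinkle, rather than a real obstacle, is the quantum grading parity: $\Kh(K)$ is supported on $j\in 1+2\Z$, so its Poincar\'e series uses only odd powers of $q$. Since Rasmussen's invariant $s$ is even for knots, the pawn move piece $q^s(q+q^{-1})=q^{s-1}+q^{s+1}$ has the correct parity, and the equation then forces $f_2(t,q)(1+tq^4)$, and hence $f_2(t,q)$ itself, to be supported in odd $q$-powers. This ensures that every knight move piece reconstructed from a monomial $n_{i,j}t^iq^j$ of $f_2$ lands at valid bidegrees $(i,j)$ and $(i+1,j+4)$ with $j,j+4\in 1+2\Z$. The whole argument is formal; I do not expect any substantive difficulty.
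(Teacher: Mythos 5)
Your proposal is correct and matches the paper's (implicit) treatment: the paper presents the reformulation as an immediate translation via Poincar\'e series, exactly the bookkeeping you carry out, with additivity over direct sums giving one direction and the fact that bigraded $\Q$-vector spaces are determined by their graded dimensions giving the other. Your parity observation is a harmless extra check that the paper does not bother to record.
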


\subsection{Lee's deformation}
In \cite{Lee}, Lee introduced a deformation of the (co-)chain complex yielding Khovanov homology, which in fact is a filtered differential, where the filtration level is given by the quantum degree. The zeroth page of the resulting spectral sequence is the usual Khovanov complex, with the usual differential $d_0$. Thus, the first page $E_1$ is simply Khovanov homology.\footnote{In some of the literature, for example in \cite{s-invariant}, what we call the $E_n$ page of the Lee spectral sequence is denoted $E_{n+1}$, and our differential $d_n$ is their $d_{n+1}$.} The higher differentials $d_n$ on $E_n$ have degree $(1, 4n)$. Lastly, if $K$ is a knot, the resulting spectral sequence converges to $\Q\{0,s-1\} \oplus \Q\{0,s+1\}$, where $s$ is Rasmussen's invariant.

Using the above properties, it is immediate to check that if the Lee spectral sequence of a knot $K$ degenerates after the first page, then there must be a Knight Move decomposition of $\Kh(K)$. This is true for example for all alternating knots~\cite{LeeAlt}, and more generally for all quasi-alternating knots~\cite{MO}, as well as for all knots with unknotting number not bigger than 2~\cite{AD}.

For a general knot, a corollary of Lee's spectral sequence is that we have a decomposition of Khovanov homology into a pawn move and several, possibly ``longer'' knight moves, of the form
\[
\Q\{i, j\} \oplus \Q\{i+1, j+4n\}.
\]
In other words, for any knot $K$, there is a family of two variable Laurent polynomials $f_{2l} \in \N[t^{\pm1}, q^{\pm1}]$, for $l \geq 1$, so that
\[
\Kh(K)(t,q) = q^s (q+q^{-1}) + \sum_{l \geq 1} f_{2l}(t,q) (1 + tq^{4l}).
\]
The Knight Move Conjecture is equivalent to saying that $f_{2l}$ can be set to $0$ for all $l \geq 2$.

In this note, we present a counterexample to the Knight Move Conjecture. The example that we give has a non-trivial differential $d_2$.

\section{The counterexample}

Our counterexample is the knot $K$ illustrated in Figure \ref{fig:K}. It is obtained from an $8$-crossing diagram of the unknot by doing a full positive twist along $6$ strands. The resulting diagram has $38$ crossings.

\begin{center}
\begin{figure}
\resizebox{.75\textwidth}{!}{
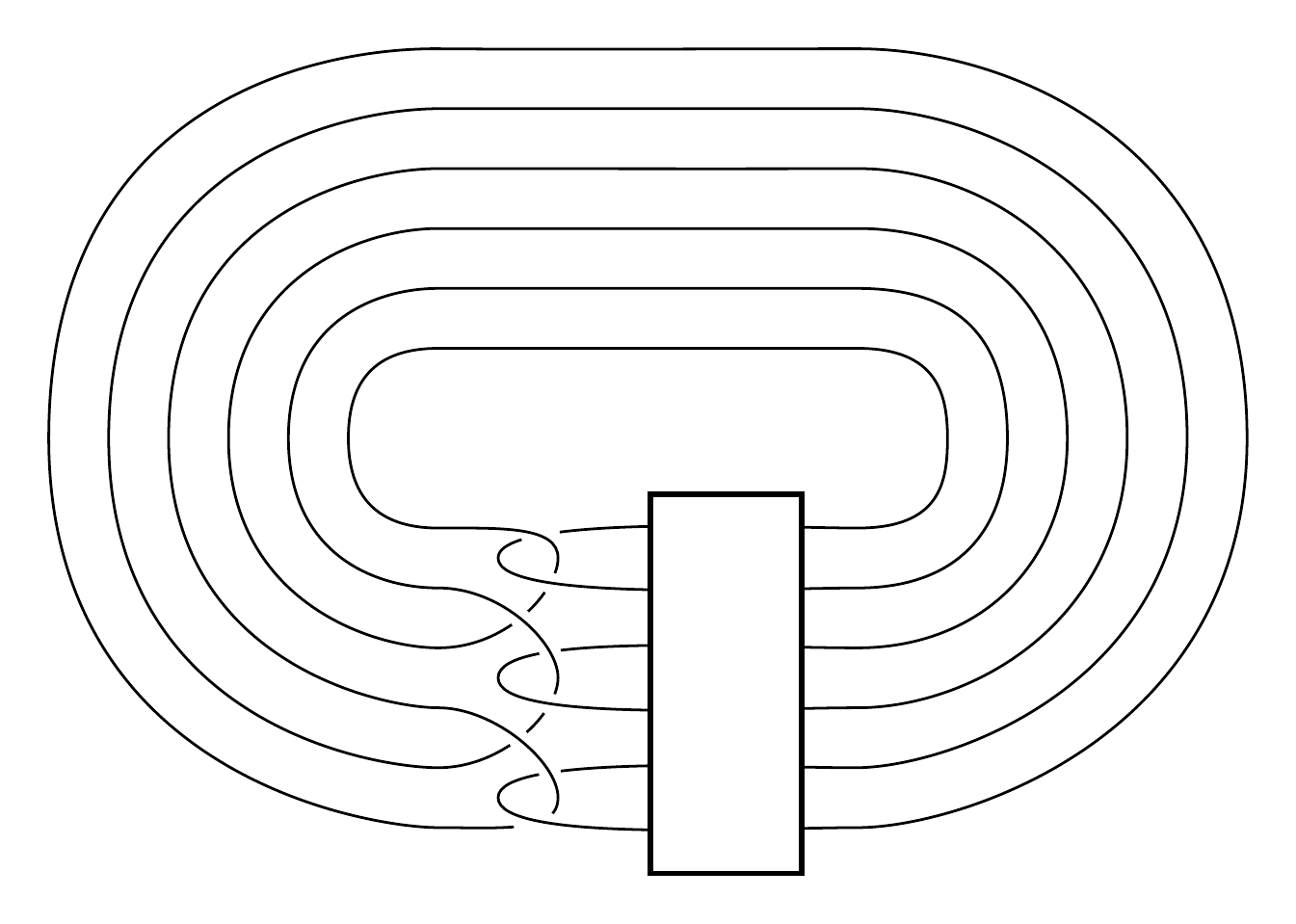
}
\caption{The knot $K$. The box labelled $+1$ denotes a full positive twist.}
\label{fig:K}
\end{figure}
\end{center}

\begin{theorem}
The knot $K$ in Figure \ref{fig:K} does not satisfy the Knight Move Conjecture. Moreover, the second Lee differential $d_2$ of bidegree $(1,8)$ is non-vanishing.
\end{theorem}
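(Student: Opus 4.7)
The theorem is a concrete statement about one explicit $38$-crossing diagram, so the plan is essentially computational. First, I would compute the rational Khovanov homology $\Kh(K;\Q)$ as a bigraded vector space. A direct evaluation of the Khovanov bracket on $38$ crossings is wildly infeasible, so the key practical step is to exploit the structure of the diagram: treat the full twist on $6$ strands as a tangle, compute its Bar-Natan complex and simplify it aggressively via delooping and Gaussian elimination, then glue the result to the outer $8$-crossing tangle and simplify once more. This is the kind of computation handled by implementations such as Bar-Natan's Khovanov programs or Sch\"utz's KnotJob, provided the twist region is processed as a single tangle rather than one crossing at a time.

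Once $\Kh(K)(t,q)$ is tabulated, I would pin down Rasmussen's $s$-invariant of $K$, either by reading it off the Lee homology computed in parallel or by using slice-genus bounds inherited from the explicit description of $K$ as a twist on the unknot. After subtracting the pawn-move piece $q^{s}(q+q^{-1})$, I would run a greedy pairing algorithm for length-$1$ knight moves: locate the lexicographically smallest surviving bidegree $(i,j)$ with multiplicity $m$; if $(i+1,j+4)$ has multiplicity at least $m$, subtract $m\, t^{i}q^{j}(1+tq^{4})$; otherwise record failure. Because the smallest surviving bidegree must be the lower-left corner of some knight move, this greedy process is forced for length-$1$ pieces, so any failure directly demonstrates that the Knight Move Conjecture fails for $K$.

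To upgrade the failure into the more precise statement $d_{2}\neq 0$, I would use the decomposition $\Kh(K)(t,q) = q^{s}(q+q^{-1}) + \sum_{l\geq 1} f_{2l}(t,q)(1+tq^{4l})$ guaranteed by the Lee spectral sequence, in which each length-$l$ piece is produced by a nontrivial $d_{l}$. The leftover from the greedy step must decompose as $\sum_{l\geq 2} f_{2l}(t,q)(1+tq^{4l})$ with $f_{2l}\in\N[t^{\pm1},q^{\pm1}]$. I would then inspect the bidegrees of the unpaired generators and show that no combination of $(1+tq^{4l})$ pieces with $l\geq 3$ (quantum offsets $\geq 12$) can match them, forcing $f_{4}\neq 0$ and hence $d_{2}\neq 0$. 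The main obstacle is computational: even after tangle-local simplification the intermediate chain complex is enormous, so the entire argument rests on running a sufficiently optimized implementation; once the bigraded dimensions of $\Kh(K;\Q)$ are in hand, everything else reduces to a small finite check on the resulting table.
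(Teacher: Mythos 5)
Your overall strategy---compute $\Kh(K;\Q)$ by machine (the paper uses JavaKh-v2), then reduce both claims to a finite check on the resulting table---is the same as the paper's, and your greedy length-$1$ pairing is indeed a forced, hence valid, test of the Knight Move Conjecture: processing bidegrees in increasing homological order, the coefficient of $f_2$ at the current bidegree is determined, so the first unpairable entry certifies failure. For this knot that failure occurs at bidegree $(1,1)$, where $\Kh^{1,1}\neq 0$ while both potential partners $(0,-3)$ and $(2,5)$ vanish; the paper simply exhibits this single local obstruction rather than running the global pairing, which also avoids needing $s$ for the first claim (an entry with $i=1$ cannot lie in the pawn piece).

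The gap is in your argument that $d_2\neq 0$. The claim that ``the leftover from the greedy step must decompose as $\sum_{l\geq 2} f_{2l}(t,q)(1+tq^{4l})$'' is unjustified: the greedy leftover is not well-defined once the process fails, and even where it is, the greedy can pair two generators as a length-$1$ knight move when in the actual spectral-sequence decomposition they are feet and heads of longer pieces (e.g.\ $1+tq^4+tq^8+t^2q^{12}$ is greedily consumed by length-$1$ pieces yet can arise entirely from $f_4=1+tq^4$). So your leftover need not equal the true $\sum_{l\geq 2}$ part, and arguing about its bidegrees does not directly constrain the actual differentials. The repair is to localize, as the paper does: the generator in $\Kh^{1,1}$ must die by the $E_\infty$ page (which is concentrated in homological degree $0$), it cannot interact with $d_1$ since $\Kh^{0,-3}=\Kh^{2,5}=0$, and it cannot interact with $d_n$ for $n\geq 3$ since $\Kh^{0,1-4n}=\Kh^{2,1+4n}=0$ for all such $n$; hence $d_2$ must be nonzero on or into this generator. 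This one-entry argument replaces both your greedy pass and the leftover analysis.
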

\begin{proof}
The Khovanov homology of $K$ is computed using the program ``JavaKh-v2'', an update by Scott Morrison of Jeremy Green's original program, both of which are available on the Knot Atlas~\cite{KAT}. The result is shown in Table \ref{table}. The entry $tq$ (marked in red) is non-empty. If the Knight Move Conjecture were true, this should be matched by a non-zero entry in either $q^{-3}$ or $t^2q^5$. However, these are both empty.

Regarding the Lee spectral sequence, the entry $tq$ cannot be cancelled by a $d_1$ differential, because both the entries $q^{-3}$ and $t^2q^5$ are empty. It follows that it must be cancelled by a higher differential, which is necessarily $d_2$, since there is no room for non-trivial maps of bidegree $(1, 4n)$ for $n \geq 3$, as one can easily check from Table \ref{table}.
\end{proof}

{\small
\begin{center}
\begin{table}
  \begin{tabular}{ r|| c | c | c | c | c | c | c | c | c | c | c | c | c | c | c | c | c | c | c | c | c | c | c }
									 &-18&-17&-16&-15&-14&-13&-12&-11&-10&-9 &-8 &-7 &-6 &-5 &-4 &-3 &-2 &-1 & 0 & 1 & 2 & 3 & 4 \\
									\hline\hline
									13 &   &   &   &   &   &   &   &   &   &   &   &   &   &   &   &   &   &   &   &   &   &   & 1 \\ \hline
									11 &   &   &   &   &   &   &   &   &   &   &   &   &   &   &   &   &   &   &   &   &   & 1 &   \\ \hline
									9	 &   &   &   &   &   &   &   &   &   &   &   &   &   &   &   &   &   &   &   & 1 & 2 & 1 &   \\ \hline
									7	 &   &   &   &   &   &   &   &   &   &   &   &   &   &   &   &   &   &   & 3 & 4 & 2 &   &   \\ \hline
									5	 &   &   &   &   &   &   &   &   &   &   &   &   &   &   &   &   &   & 5 & 4 & 1 &   &   &   \\ \hline
									3	 &   &   &   &   &   &   &   &   &   &   &   &   &   &   &   & 1 & 6 & 6 & 4 & 1 &   &   &   \\ \hline
									1	 &   &   &   &   &   &   &   &   &   &   &   &   &   &   & 3 & 9 &10 & 4 & 1 & 1\cellcolor{red} &   &   &   \\ \hline
									-1 &   &   &   &   &   &   &   &   &   &   &   &   &   & 3 & 9 & 8 & 3 & 1 & 1 &   &   &   &   \\ \hline
									-3 &   &   &   &   &   &   &   &   &   &   &   &   & 3 &10 &12 & 6 & 1 &   &   &   &   &   &   \\ \hline
									-5 &   &   &   &   &   &   &   &   &   &   & 1 & 5 &10 &10 & 2 &   & 1 &   &   &   &   &   &   \\ \hline
									-7 &   &   &   &   &   &   &   &   & 1 & 2 & 4 & 6 & 7 & 3 & 1 &   &   &   &   &   &   &   &   \\ \hline
									-9 &   &   &   &   &   &   &   & 2 & 1 & 3 & 7 & 8 & 2 &   &   &   &   &   &   &   &   &   &   \\ \hline
									-11&   &   &   &   &   &   & 2 & 2 & 4 & 5 & 3 &   &   &   &   &   &   &   &   &   &   &   &   \\ \hline
									-13&   &   &   &   &   & 3 & 4 & 2 & 2 & 2 & 1 &   &   &   &   &   &   &   &   &   &   &   &   \\ \hline
									-15&   &   &   &   & 3 & 3 & 1 & 2 & 1 &   &   &   &   &   &   &   &   &   &   &   &   &   &   \\ \hline
									-17&   &   &   & 2 & 3 & 2 & 1 &   &   &   &   &   &   &   &   &   &   &   &   &   &   &   &   \\ \hline
									-19&   &   & 2 & 3 & 1 &   &   &   &   &   &   &   &   &   &   &   &   &   &   &   &   &   &   \\ \hline
									-21&   & 1 & 2 &   &   &   &   &   &   &   &   &   &   &   &   &   &   &   &   &   &   &   &   \\ \hline
									-23&   & 2 &   &   &   &   &   &   &   &   &   &   &   &   &   &   &   &   &   &   &   &   &   \\ \hline
									-25& 1 &   &   &   &   &   &   &   &   &   &   &   &   &   &   &   &   &   &   &   &   &   &   \\
  \end{tabular}
	\vspace{10pt}
\caption{The Khovanov homology of the knot $K$. The homological grading $i$ is on the horizontal axis, and the quantum grading $j$ on the vertical axis. The red box marks an entry that cannot be cancelled by a $d_1$ differential.}
\label{table}
\end{table}
\end{center}
}

In fact, one can determine the whole structure of the Lee spectral sequence for $K$ using the program ``UniversalKh'' of Scott Morrison \cite{KAT, UniversalKh}. It turns out that the $d_1$ differential (the knight move) cancels most of the terms in Khovanov homology, leaving only four copies of $\Q$ on the $E_2$ page, in bidegrees $(0, -1)$, $(0, 1)$, $(1, 1)$ and $(2, 9)$. The last two are cancelled by the $d_2$ differential, and the first two survive to the $E_{\infty}$ page. Rasmussen's invariant for this knot is $s=0$.

\begin{remark}
We came across the knot $K$ while studying the {\em generalized crossing changes} introduced by Cochran and Tweedy in \cite{positive}. The full twist shown in Figure \ref{fig:K} is an example of a generalized negative crossing. The resulting knot $K$ is slice in the blown-up ball $B^4 \# \overline{\mathbb{CP}}^2$, but it is not slice in $B^4$, because its Alexander polynomial
$$ \Delta_K(t) = -3t^{-1} + 7 - 3t$$
does not satisfy the Fox-Milnor criterion.
\end{remark}

\begin{remark}
It is easy to see that the knot $K$ can be unknotted by three crossing changes. Since the Knight Move Conjecture holds for knots of unknotting number at most two~\cite{AD}, it follows that $K$ has 
unknotting number $3$.
\end{remark}

We end with an open problem.

\begin{question}
Given any $n \geq 3$, does there exist a knot for which the $d_n$ differential in the Lee spectral sequence is nonzero?
\end{question}

In view of the work of Alishahi and Dowlin \cite{AD}, if for a knot $K$ we have $d_n \neq 0$, then $K$ needs to have unknotting number at least $2n-1$.

\subsection*{Acknowledgements}
We are grateful to Nate Dowlin, Lukas Lewark, Scott Morrison, Jake Rasmussen and Alexander Shumakovitch for helpful comments on the paper.

The first author was partially supported by the NSF grant DMS-1708320.

% ----------------------------------------------------------------
\bibliographystyle{amsplain}
\bibliography{topology}
\end{document}